\begin{document}
\title{\bf Existence of asymptotic almost automorphic solutions of fractional integro differential equations with non local conditions}
\author{Syed Abbas$^1$\thanks{
Tel.\ +91-190-523-7933, Fax.\ +91-190-523-7942, \
E-mail: sabbas.iitk@gmail.com, \
abbas@iitmandi.ac.in},
Gaston M. N'Guerekata$^2$
\\\\
$^1$School of Basic Sciences \\
Indian Institute of Technology Mandi \\
Mandi, H.P., 175001, India. \\\\
$^2$ Department of Mathematics \\
Morgan State University, 1700 E. Cold Spring Lane \\ Baltimore, MD 21251, USA.
}

\begin{titlepage}

\maketitle

\begin{abstract}
This paper deals with the existence of asymptotic almost automorphic solution of fractional integro differential equation. We prove the result by using fixed point theorems. We show the result with Lipschitz condition and without Lipschitz condition on the forcing term. At the end examples have been given to illustrate the analytical findings.
\end{abstract}

\noindent {\it Keywords: Caputo fractional derivative; Fixed point method, Sectorial operator}


\end{titlepage}

\newtheorem{theorem}{Theorem}[section]
\newtheorem{lemma}[theorem]{Lemma}
\newtheorem{proposition}[theorem]{Proposition}
\newtheorem{remark}[theorem]{Remark}
\newtheorem{definition}[theorem]{Definition}
\newtheorem{example}[theorem]{Example}

\section{Introduction}
\label{s:1}

There are two major kind of fractional derivatives \cite{diet1}, one defined in the sense of Caputo and another in Riemann-Liouville. Both are defined as follows
$$^{C}D_t^{\alpha} f(t)=\frac{1}{\Gamma (n-\alpha)}\int_0^t(t-s)^{n-{\alpha}-1}f^{(n)}(s)ds,$$ $$\ D_t^{\alpha} f(t)=\frac{1}{\Gamma (n-\alpha)} \frac{d^n}{dt^n}\int_0^t(t-s)^{n-{\alpha}-1}f(s)ds,$$ where the symbol $C$ is meant for Caputo's derivative. The above integrals exists
provided the right hand sides exist point-wise on $[0,T]$ and $\Gamma$ denotes the gamma function. Using the \emph{Riemann-Liouville fractional integral} \cite{diet1}
$I^{\alpha}_0 f(t)=\frac{1}{\Gamma(\alpha)}\int_0^t(t-s)^{\alpha-1}f(s)ds$, we have
$$^{C}D_t^{\alpha} f(t) =I^{n-\alpha}_0\frac{d^n}{dt^n}f(t) \ \mbox{and} \ D_t^{\alpha} f(t)=\frac{d^n}{dt^n}I^{n-\alpha}_0f(t)$$. $I^{\alpha}_0f$ exists, for instance, for all $\alpha >0,$ if $f\in C^0([0,T])\cap L^1_{loc}([0,T])$, moreover $I^{\alpha}_0f(0) = 0$.

One can see that the both the fractional derivatives are actually nonlocal operator because integral is a nonlocal operator.
Moreover, calculating time fractional derivative of a function at some
time requires all the past history and hence fractional derivatives can be used for modeling systems with
memory. Fractional differential equations can be formulated using both Caputo or Riemann-Liouville fractional derivatives.

Asymptotic almost automorphic functions has been introduced in the literature by N'Guerekata \cite{ngur}. There has been many application of this function in the theory of differential equations, for more details one could see the \cite{diagana,ding,liang,lizama, zhao} and references therein. As one could see from last few decades fractional differential equations got lot of attention from mathematicians, scientists, engineers because of their application in various fields like viscoelasticity, anomalous diffusion etc. Recently, many authors have established results like almost automorphic, asymptotic almost automorphic, pseudo almost automorphic \cite{agarwal, araya, abbas, abbas1, meril, mophou1, mophou2,zhao}.

Zhao et. al. established asymptotically almost automorphic solution of the following fractional differential equation,
\begin{eqnarray}
^{C}D_t^{\alpha}u(t) &=&Au(t)+{^{C}D}_t^{\alpha-1}f(t,u(t)),\nonumber \\
&& u(0)=u_0, \quad t \in \mathbb{R}^+, \ 1< \alpha <2,
\label{EE1}
\end{eqnarray}
where $A$ is linear densely defined sectorial operator.

Motivated by above work, in this work we shall study the existence of asymptotic almost automorphic  solutions of the following fractional differential equations of Caputo's type of order $\alpha \in (1,2),$
\begin{eqnarray}
^{C}D_t^{\alpha}u(t) &=&Au(t)+{^{C}D}_t^{\alpha-1}f(t,u(t),Ku(t)),\nonumber \\
Ku(t) &=& \int_{-\infty}^tk(t-s)u(s)ds, \nonumber \\
u(0)&+&g(u)=u_0, \quad t \in \mathbb{R},
\label{EE2}
\end{eqnarray}
where $A:D(A) \subset X \rightarrow X$ is a linear densely defined sectorial operator and $X$ is a complex Banach space. The initial point $u_0 \in X$ and $f \in AAA(\mathbb{R} \times X \times X, X), k \in L^1(\mathbb{R}^+).$
The organization of the paper is as follows: In section-2, we give some basic definitions and results. In section-2,  we establish the existence of asymptotic almost automorpchic solution of the equation (\ref{EE2}). At end in section-2, we give examples to illustrate the analytical findings.

\section{Preliminaries}

We assume that the spaces $X, Y$ equipped with the norms $\|\cdot\|_X$ and $\|\cdot\|_Y$ are Banach spaces respectively. The notation $\mathcal{C}(\mathbb{R}, X)$ denote the collection of all continuous functions from $\mathbb{R}$ to $X.$ The space of bounded continuous functions from $\mathbb{R}$ to $X$ is denoted by $\mathcal{BC}(\mathbb{R}, X).$ The space $\mathcal{BC}(\mathbb{R}, X)$ equipped with the norm $\|\phi\|_{\infty}=\sup_{t \in \mathbb{R}}\|\phi(t)\|$ is a Banach space. Moreover the notion $\mathbf{B}(X, Y)$ denote the space of bounded linear operators from $X$ to $Y.$ We write $\mathbf{B}(X)$ when $X=Y.$

The definition of almost automorphic operator has been given by
N'Gu$\acute{e}$r$\acute{e}$kata and Pankov \cite{guerekata}. Now
we state the definitions, for more details one can see \cite{ngur}.
\begin{definition}\label{aa} \rm
A bounded continuous function $f:\mathbb{R}\to X$ is called
almost automorphic if for every real sequence $(s_{n})$, there
exists a subsequence $(s_{n_k})$ such that
$$
g(t)=\lim_{n\to\infty}f(t+s_{n_k})
$$
is well defined for each $t\in \mathbb{R}$ and
$$
\lim_{n\to\infty}g(t-s_{n_k})=f(t)
$$
for each $t\in \mathbb{R}$. Denote by $AA(\mathbb{R}, X)$ the set of
all such functions.
\end{definition}

\begin{definition} \rm
A bounded continuous function $f:\mathbb{R}\times X \to X$ is
called almost automorphic in $t$ uniformly for $x$ in compact
subsets of $X$ if for every compact subset $K$ of $X$ and every
real sequence $(s_{n})$, there exists a subsequence $(s_{n_k})$
such that
$$g(t,x)=\lim_{n\to\infty}f(t+s_{n_k},x)$$
is well defined for each $t\in \mathbb{R}$, $x\in K$ and
$$\lim_{n\to\infty}g(t-s_{n_k},x)=f(t,x)$$
for each $t\in \mathbb{R}$, $x\in K$. Denote by
$AA(\mathbb{R}\times X, X)$ the set of all such functions.
\end{definition}
\noindent
The space of all bounded continuous functions $h:\mathbb{R}\rightarrow X$ such that 
$\|h(t)\| \rightarrow 0$ as $t\rightarrow \infty$ is denoted by $\mathcal{C}_0(\mathbb{R}, X).$ Moreover, we denote $\mathcal{C}_0(\mathbb{R}\times X, X)$ the space of all bounded continuous functions from
 $\mathbb{R} \times X$ to $X$ such that $\lim_{t\rightarrow \infty} \|h(t,x)\|=0$ in $t$ and uniformly on any bounded subset of $X.$

 \begin{definition}\label{aa} \rm
A bounded continuous function $f \in \mathcal{C}_0(\mathbb{R}, X)$ is called
asymptotically almost automorphic iff it can be written as $f=f_1+f_2,$ where $f_1 \in AA(\mathbb{R}, X)$ and $f_2 \in \mathcal{C}_0(\mathbb{R}, X).$ This kind of functions is denoted by $AAA(\mathbb{R}^+, X).$
\end{definition}

 \begin{definition}\label{aa} \rm
A bounded continuous function $I \in \mathcal{C}_0(\mathbb{R} \times, X)$ is called
asymptotically almost automorphic iff it can be written as $I=I_1+I_2,$ where $I_1 \in AA(\mathbb{R} \times X, X)$ and $I_2 \in \mathcal{C}_0(\mathbb{R} \times X, X).$ This kind of functions is denoted by $AAA(\mathbb{R} \times X, X).$
\end{definition}

We state a lemma by Liang et.al. \cite{liang} about the composition result.
\begin{lemma}
Let $f(t,x)=g(t,x)+\phi(t,x)$ is an asymptotically almost automorphic function with $g(t,x) \in AA(\mathbb{R}\times X, X)$ and $\phi(t,x) \in \mathcal{C}_0(\mathbb{R}^+ \times X, X)$ and $f(t,x)$ is uniformly continuous on any bounded subset $\Omega \subset X$
uniformly in $t.$ Then for $u(\cdot) \in AAA(\mathbb{R}, X),$ the function $f(\cdot, u(\cdot)) \in AAA(\mathbb{R} \times X, X).$
\end{lemma}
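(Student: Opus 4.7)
The plan is to reduce the statement to a combination of the standard composition theorem for almost automorphic functions and direct $\mathcal{C}_0$--estimates. Since $u \in AAA(\mathbb{R},X)$, write $u = u_1 + u_2$ with $u_1 \in AA(\mathbb{R},X)$ and $u_2 \in \mathcal{C}_0(\mathbb{R},X)$; in particular $u_1$ and $u$ are bounded and take values in some bounded set $\overline{B} \subset X$. Adding and subtracting $f(t,u_1(t))$ and using $f - g = \phi$, I would work with the identity
\[
f(t,u(t)) = g(t,u_1(t)) + \bigl[f(t,u(t)) - f(t,u_1(t))\bigr] + \phi(t,u_1(t)),
\]
and show that the first summand belongs to $AA(\mathbb{R},X)$ while the two bracketed summands decay to zero as $t \to \infty$.

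For the almost automorphic piece, I would appeal to the standard composition theorem: if $g \in AA(\mathbb{R}\times X, X)$ is uniformly continuous in $x$ on bounded subsets of $X$ uniformly in $t$, and $u_1 \in AA(\mathbb{R},X)$, then $g(\cdot,u_1(\cdot)) \in AA(\mathbb{R},X)$. The uniform continuity of $g$ on bounded sets is not in the hypothesis directly but can be deduced from that of $f$ together with the corresponding property for $\phi$: on $[0,T]\times \overline{B}$, continuity plus compactness give uniform continuity, and for $t > T$ (chosen so that $\sup_{x \in \overline{B}} \|\phi(t,x)\| < \varepsilon/2$) the triangle inequality yields $\|\phi(t,x)-\phi(t,y)\| < \varepsilon$ for all $x,y \in \overline{B}$. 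Combining these two regimes gives the required uniform continuity of $\phi$, and hence of $g = f - \phi$.

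For the $\mathcal{C}_0$ pieces, the second summand is handled using uniform continuity of $f$: given $\varepsilon > 0$ pick $\delta > 0$ such that $\|x-y\| < \delta$ with $x,y \in \overline{B}$ implies $\|f(t,x)-f(t,y)\| < \varepsilon$ for all $t$; since $\|u(t)-u_1(t)\| = \|u_2(t)\| \to 0$, this difference lies in $\mathcal{C}_0(\mathbb{R},X)$. The third summand is easier: $\phi(t,x) \to 0$ as $t \to \infty$ uniformly for $x$ in bounded subsets of $X$, so $\|\phi(t,u_1(t))\| \to 0$ because $\{u_1(t)\}$ is bounded.

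The main obstacle I anticipate is the verification of uniform continuity of $g$ on bounded sets required to apply the composition theorem, since the hypothesis gives this only for $f$; once that subtlety is resolved by the compactness/decay argument above, the three summands recombine to display $f(\cdot,u(\cdot))$ as the sum of an element of $AA(\mathbb{R},X)$ and an element of $\mathcal{C}_0(\mathbb{R},X)$, proving $f(\cdot,u(\cdot)) \in AAA(\mathbb{R},X)$.
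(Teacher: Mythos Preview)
The paper does not give its own proof of this lemma: it is quoted verbatim as a result of Liang, Zhang and Xiao \cite{liang}, so there is nothing in the present paper to compare your argument against. Your decomposition
\[
f(t,u(t)) = g(t,u_1(t)) + \bigl[f(t,u(t)) - f(t,u_1(t))\bigr] + \phi(t,u_1(t))
\]
and the treatment of the two $\mathcal{C}_0$ terms are exactly the standard route used in the cited reference, so in spirit your proposal matches the original source.

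There is, however, a genuine gap in your handling of the uniform continuity of $g$. You argue that $\phi$ is uniformly continuous on $\overline{B}$ uniformly in $t$ by splitting into $t>T$ (where the $\mathcal{C}_0$ decay gives smallness directly) and $t\in[0,T]$ (where you invoke ``continuity plus compactness''). The second step fails as written: $\overline{B}$ is only a bounded subset of the Banach space $X$, not a compact one, so $[0,T]\times\overline{B}$ is not compact and mere continuity of $\phi$ does not yield uniform continuity there. To repair this you must either (i) assume from the outset, as is common in the literature, that $\phi$ itself is uniformly continuous on bounded sets uniformly in $t$ (some authors build this into the definition of $\mathcal{C}_0(\mathbb{R}^+\times X,X)$), or (ii) bypass the issue by working only on the closure of the range of $u_1$, and then invoke a version of the AA composition theorem that requires uniform continuity only on that specific set rather than on arbitrary bounded sets. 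Without one of these fixes the passage from uniform continuity of $f$ to that of $g$ is not justified.
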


 A closed and linear operator $A$ is said to be sectorial of type
$\omega$ and angle $\theta$ if there exists $0 <\theta <
\frac{\pi}{2}, M> 0$ and $\omega \in \mathbb{R}$ such that its
resolvent exists outside the sector $$\omega+S_{\theta}:=
\{\omega+\lambda: \lambda \in \mathbb{C}, \ |arg(-\lambda)| <
\theta\},$$ and $$\|(\lambda- A)^{-1}\| \le
\frac{M}{|\lambda-\omega|}, \ \lambda \not \in \omega+
S_{\theta}.$$ Sectorial operators are well studied in the
literature. For a recent reference including several examples and
properties we refer the reader to \cite{haase}. Note that an
operator A is sectorial of type $\omega$ if and only if $\lambda I
- A$ is sectorial of type $0.$

The equation (\ref{EE2}) can be thought of a limiting case of the
following equation
             \begin{eqnarray}
v^{\prime}(t)=\int_0^t
\frac{(t-s)^{\alpha-2}}{\Gamma(\alpha-1)}Av(s)ds+f(t,u(t),Ku(t)), \
t \ge 0,
\label{meq1}
             \end{eqnarray}
in the sense that the solutions are asymptotic to each other as $t
\rightarrow \infty.$ If we consider that the operator $A$ is
sectorial of type $\omega$ with $\theta \in
[0,\pi(1-\frac{\alpha}{2})),$ then problem \ref{EE2} is well
posed \cite{custa}. Thus we can use variation of parameter
formulae to get
$$v(t)=S_{\alpha}(t)(u_0-g(u))+\int_0^tS_{\alpha}(t-s)f(s,u(s),Ku(s))ds, \quad t \ge 0,$$
where
$$S_{\alpha}(t)=\frac{1}{2\pi i}\int_{\gamma} e^{\lambda t} \lambda^{\alpha-1}(\lambda^{\alpha}I-A)^{-1}d\lambda,
\quad t \ge 0,$$ where the path $\gamma$ lies outside the sector
$\omega+S_{\theta}.$ If $S_{\alpha}(t)$ is integrable then the
solution of problem (\ref{EE2}) is given by
$$u(t)=\int_{-\infty}^t S_{\alpha}(t-s)f(s,u(s),Ku(s))ds.$$
Now one can easily see that
$$v(t)-u(t)=S_{\alpha}(t)(u_0-g(u))-\int_t^{\infty}S_{\alpha}(s)f(t-s,u(t-s),Ku(t-s))ds.$$
Hence for $f \in L^p({\mathbb{R}^+\times X \times X,X}), \ p \in
[1,\infty)$ we have $v(t)-u(t) \rightarrow 0$ as $t\rightarrow
\infty.$
             \begin{definition}
A function $u: \mathbb{R} \rightarrow X$ is said to be a mild
solution to \ref{EE2} if the function $S_{\alpha}(t-s)f(s,
u(s),Ku(s))$ is integrable on $(-\infty, t)$ for each $t \in
\mathbb{R}$ and $$u(t)= \int_{-\infty}^tS_{\alpha}(t-s)f(s,
u(s),Ku(s))ds,$$ for each $t \in \mathbb{R}.$
            \end{definition}
Recently, Cuesta in \cite{custa}, Theorem-$1$, has proved that if
$A$ is a sectorial operator of type $\omega< 0$ for some $M > 0$
and $\theta \in [0,\pi(1-\frac{\alpha}{2})),$ then there exists
$C> 0$ such that $$\|S_{\alpha}(t)\| \le
\frac{CM}{1+|\omega|t^{\alpha}}$$ for $t\ge0.$ Also the following
relation holds,
$$\int_0^{\infty}\frac{1}{1+|\omega|t^{\alpha}}=\frac{|\omega|^{\frac{-1}{\alpha}}\pi}{\alpha \sin
\frac{\pi}{\alpha}}$$ for $\alpha \in (1,2).$

Assume that the function $h \mathbb{R}^+ \rightarrow (1, \infty)$ is continuous and $h(t) \ge 1$ for all $t \in \mathbb{R}^+$ and $\lim_{t\rightarrow \infty }h(t)=0.$ Denote the space
$$\mathcal{C}_h(X)=\Big\{u\in \mathcal{C}(\mathbb{R}^+, X): \lim_{t\rightarrow \infty}\frac{u(t)}{h(t)}=0\Big\}.$$ The space $\mathcal{C}_h(X)$ is a Banach space equipped with the norm $$\|u\|_h=\sup_{t \in \mathbb{R}^+}\frac{\|u(t)\|}{h(t)}.$$

The following lemma is from \cite{agarwal00}.
\begin{lemma} \label{lemma3}
A subset $R \subset \mathcal{C}_h(X)$ is said to be relatively compact if
\begin{itemize}
\item[(a1)] The set $R_b=\{u|_{[0,b]}: u \in R \}$ is relatively compact in $\mathcal{C}([0,b],X)$ for all $b \ge 0,$
\item[(a2)] $\lim_{t\rightarrow \infty}\frac{\|u(t)\|}{h(t)}=0,$ uniformly for $u \in R.$
\end{itemize}
\end{lemma}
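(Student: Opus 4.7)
The plan is to read this as a weighted Arzel\`a--Ascoli characterization on $\mathcal{C}_h(X)$, and to show sufficiency by proving that every sequence $(u_n)\subset R$ admits a subsequence converging in the norm $\|\cdot\|_h$ to some limit $u\in \mathcal{C}_h(X)$. The two hypotheses split the work cleanly: (a1) will control the behavior on each compact interval $[0,b]$, while (a2) will control the tail uniformly over $R$.

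First I would apply (a1) together with a Cantor diagonal argument. For each $k\in\mathbb{N}$, $R_k=\{u|_{[0,k]}:u\in R\}$ is relatively compact in $\mathcal{C}([0,k],X)$ (equipped with the sup norm), so by the classical Arzel\`a--Ascoli theorem I can extract a subsequence of $(u_n)$ converging uniformly on $[0,k]$. Diagonalizing over $k$ yields a single subsequence, still denoted $(u_n)$, that converges uniformly on every compact interval $[0,b]\subset\mathbb{R}^+$ to a continuous function $u:\mathbb{R}^+\to X$.

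Next I would upgrade this pointwise/compact convergence to convergence in the weighted norm. Fix $\varepsilon>0$ and use (a2) to choose $T>0$ such that $\|u_n(t)\|/h(t)<\varepsilon$ for all $t\ge T$ and all $n$. Passing to the limit (pointwise convergence suffices) gives $\|u(t)\|/h(t)\le\varepsilon$ for $t\ge T$, which both establishes $u\in\mathcal{C}_h(X)$ and yields the tail bound
\begin{equation*}
\sup_{t\ge T}\frac{\|u_n(t)-u(t)\|}{h(t)} \;\le\; \frac{\|u_n(t)\|}{h(t)}+\frac{\|u(t)\|}{h(t)} \;\le\; 2\varepsilon.
\end{equation*}
For the interior $t\in[0,T]$, the standing assumption $h(t)\ge 1$ turns the weight into a contraction,
\begin{equation*}
\sup_{0\le t\le T}\frac{\|u_n(t)-u(t)\|}{h(t)} \;\le\; \sup_{0\le t\le T}\|u_n(t)-u(t)\|,
\end{equation*}
and the right-hand side vanishes as $n\to\infty$ by the uniform convergence on $[0,T]$ produced in the previous step. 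Combining the two pieces gives $\|u_n-u\|_h\to 0$, which is the desired relative compactness.

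The main technical point is simply the glueing of the two regimes: the weight is helpful, not harmful, on $[0,T]$ precisely because $h\ge 1$, so (a1) is strong enough there, whereas the tail requires (a2) to be uniform in $n$ so that the bound survives passage to the limit. There is no genuine obstacle beyond making sure the diagonal subsequence is chosen before invoking (a2), and that the threshold $T$ is fixed \emph{before} taking $n$ large. The remaining verifications (continuity of $u$, membership in $\mathcal{C}_h(X)$) are routine consequences of the above estimates.
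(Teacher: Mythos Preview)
Your argument is correct and is exactly the standard weighted Arzel\`a--Ascoli proof one would expect. One minor quibble: in the first step you do not need to invoke Arzel\`a--Ascoli at all, since hypothesis (a1) already \emph{asserts} relative compactness of $R_k$ in $\mathcal{C}([0,k],X)$; you simply extract convergent subsequences by definition and diagonalize.

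As for comparison with the paper: there is nothing to compare. The paper does not prove this lemma; it merely quotes it from Agarwal--Andrade--Cuevas, \emph{Adv.\ Differ.\ Equ.}\ 2010, Art.\ ID 179750 (reference \cite{agarwal00} in the paper). Your write-up therefore supplies what the paper omits.
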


\begin{remark}
It is to note that the operator defined by $\int_0^t S_{\alpha}(t-s)f(s, u(s))$ may not be in $AAA(\mathbb{R}, X)$ when $f \in AAA(\mathbb{R}\times X, X).$

\end{remark}
\section{Asymptotically almost automorphic solution}
In order to prove our result, we need the following assumptions,
\begin{itemize}
\item[(H1)] The operator $A$ is a sectorial of type $\omega<0,$
\item[(H2)] The function $f \in AAA(\mathbb{R} \times X, X),$ there exists a positive constant $L_f$ such that
$$\|f(t,u,\phi)-f(t,v,\psi)\| \le L_f(\|u-v\|+\|\phi-\psi\|),$$ for all $(t, u, \phi), (t,v, \psi) \in \mathbb{R} \times X \times X.$
\item[(H3)] The operator $\{S_{\alpha}(t)\}_{t \ge 0}$ be a strongly continuous family of operators.
\end{itemize}
The following lemma is from \cite{ding}.
\begin{lemma}
Let $f=f_1+f_2 \in AAA(\mathbb{R} \times X, X)$ with $f_1 \in AA(\mathbb{R} \times X, X), f_2 \in \mathcal{C}_0(\mathbb{R} \times X, X)$ satisfying the hypothesis $(H2).$ If $u(t) \in AAA(\mathbb{R}, X),$ then $f(\cdot, u(\cdot)) \in AAA(\mathbb{R} \times X, X).$
\end{lemma}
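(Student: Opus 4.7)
The plan is to establish an $AAA$ decomposition of $f(\cdot,u(\cdot))$ by adapting the classical composition argument for almost automorphic functions. Since $u\in AAA(\mathbb{R},X)$, I write $u=u_1+u_2$ with $u_1\in AA(\mathbb{R},X)$ and $u_2\in\mathcal{C}_0(\mathbb{R},X)$, and split
\[
f(t,u(t)) \;=\; f_1(t,u_1(t)) \;+\; \bigl[f_1(t,u(t))-f_1(t,u_1(t))\bigr] \;+\; f_2(t,u(t)).
\]
The aim is to show that the first summand is almost automorphic while the other two belong to $\mathcal{C}_0(\mathbb{R},X)$.

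The preliminary step, which I expect to be the most delicate, is to transfer the Lipschitz bound in $(H2)$ from $f$ to $f_1$. Fix $t\in\mathbb{R}$ and $x,y\in X$. For any sequence $s_n\to\infty$, the defining property of $\mathcal{C}_0(\mathbb{R}\times X,X)$ forces $f_2(t+s_n,x)$ and $f_2(t+s_n,y)$ to vanish (the two-point set $\{x,y\}$ is bounded), while almost automorphy of $f_1$ supplies a subsequence $(s_{n_k})$ along which both $f_1(t+s_{n_k},x)$ and $f_1(t+s_{n_k},y)$ converge. Passing to the limit in $\|f(t+s_{n_k},x)-f(t+s_{n_k},y)\|\le L_f\|x-y\|$ then yields $\|f_1(t,x)-f_1(t,y)\|\le L_f\|x-y\|$. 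Armed with this, $f_1(\cdot,u_1(\cdot))\in AA(\mathbb{R},X)$ follows by the standard diagonal argument: given any sequence $(\sigma_n)$, extract a subsequence so that $u_1(t+\sigma_{n_k})\to\tilde u_1(t)$ pointwise and $f_1(t+\sigma_{n_k},\cdot)$ converges on a countable dense subset of the (bounded) range of $u_1$; the uniform Lipschitz bound then upgrades this to $\lim_k f_1(t+\sigma_{n_k},u_1(t+\sigma_{n_k}))=g(t,\tilde u_1(t))$, and the reverse limit is handled symmetrically.

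The remaining two pieces are routine. The hypothesis $(H2)$ directly gives $\|f_1(t,u(t))-f_1(t,u_1(t))\|\le L_f\|u_2(t)\|\to 0$, so the middle term lies in $\mathcal{C}_0(\mathbb{R},X)$. Since every $AAA$ function is bounded, the range of $u$ is contained in some ball $B_R\subset X$, and $f_2\in\mathcal{C}_0(\mathbb{R}\times X,X)$ means $f_2(t,x)\to 0$ uniformly for $x\in B_R$; hence $\|f_2(t,u(t))\|\to 0$. Collecting the three contributions yields a decomposition $f(\cdot,u(\cdot))=\phi+\psi$ with $\phi\in AA(\mathbb{R},X)$ and $\psi\in\mathcal{C}_0(\mathbb{R},X)$, which is precisely $f(\cdot,u(\cdot))\in AAA(\mathbb{R},X)$.
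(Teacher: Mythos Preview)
The paper does not actually prove this lemma; it merely cites \cite{ding} (Ding--Xiao--Liang). Your argument is the standard composition proof used in that literature and is structurally correct: decompose $u=u_1+u_2$, split $f(t,u(t))$ into $f_1(t,u_1(t))$ plus two remainder terms, and show the remainders lie in $\mathcal{C}_0$.

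There is, however, one genuine slip in your transfer of the Lipschitz bound to $f_1$. From $\|f(t+s_{n_k},x)-f(t+s_{n_k},y)\|\le L_f\|x-y\|$ together with $f_2(t+s_{n_k},\cdot)\to 0$ and the convergence of $f_1(t+s_{n_k},\cdot)$, the limit you obtain is
\[
\|g(t,x)-g(t,y)\|\le L_f\|x-y\|,
\]
where $g(t,\cdot)=\lim_k f_1(t+s_{n_k},\cdot)$; it is \emph{not} a bound on $f_1(t,\cdot)$ itself. To reach $f_1$ you must invoke the second half of the almost automorphy of $f_1$, namely $f_1(t,x)=\lim_k g(t-s_{n_k},x)$, and pass to the limit once more in the inequality for $g$. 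With that extra line the argument is complete; without it the step ``then yields $\|f_1(t,x)-f_1(t,y)\|\le L_f\|x-y\|$'' is unjustified.

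The rest of your outline---the diagonal extraction for $f_1(\cdot,u_1(\cdot))\in AA(\mathbb{R},X)$ using the Lipschitz bound and separability of the range of $u_1$, the estimate $\|f_1(t,u(t))-f_1(t,u_1(t))\|\le L_f\|u_2(t)\|\to 0$, and the uniform decay of $f_2(t,u(t))$ on the bounded range of $u$---is correct and matches the approach in the cited reference.
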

The following lemma is from \cite{agarwal00}.
\begin{lemma}
Under the hypothesis $(H1),$ if the function $f \in AAA(\mathbb{R}, X)$ then the operator $Ff$ defined by
$$(Ff)(t)=\int_{-\infty}^t S_{\alpha}(t-s)f(s)ds, \quad t \in \mathbb{R}$$
is asymptotically almost automorphic.
\end{lemma}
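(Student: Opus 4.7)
The natural approach is to use the canonical $AAA$ decomposition and show each piece maps into the appropriate subspace. Write $f = f_1 + f_2$ with $f_1 \in AA(\mathbb{R},X)$ and $f_2 \in \mathcal C_0(\mathbb{R},X)$, and split
\[
(Ff)(t) = \int_{-\infty}^{t} S_\alpha(t-s)f_1(s)\,ds \;+\; \int_{-\infty}^{t} S_\alpha(t-s)f_2(s)\,ds =: (Ff_1)(t)+(Ff_2)(t).
\]
The plan is to prove $Ff_1 \in AA(\mathbb{R},X)$ and $Ff_2 \in \mathcal C_0(\mathbb{R}^+,X)$; continuity of both pieces follows from the strong continuity of $S_\alpha$ (hypothesis (H3)) together with the integrable bound $\|S_\alpha(\sigma)\|\le CM/(1+|\omega|\sigma^\alpha)$ coming from (H1) and Cuesta's theorem.

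For the almost automorphic piece, given a real sequence $(s_n)$ I would extract, via almost automorphy of $f_1$, a subsequence $(s_{n_k})$ and a bounded function $g_1$ with $g_1(t)=\lim_k f_1(t+s_{n_k})$ and $f_1(t)=\lim_k g_1(t-s_{n_k})$ pointwise. Changing variables $\sigma = s - s_{n_k}$ gives
\[
(Ff_1)(t+s_{n_k}) = \int_{-\infty}^{t} S_\alpha(t-\sigma)\,f_1(\sigma+s_{n_k})\,d\sigma.
\]
Since $\|f_1\|_\infty < \infty$ and $\sigma \mapsto \|S_\alpha(t-\sigma)\|\,\|f_1\|_\infty$ is integrable on $(-\infty,t]$ (its integral being controlled by $CM\|f_1\|_\infty |\omega|^{-1/\alpha}\pi/(\alpha\sin(\pi/\alpha))$), the Lebesgue dominated convergence theorem yields $(Ff_1)(t+s_{n_k}) \to \int_{-\infty}^{t} S_\alpha(t-\sigma)g_1(\sigma)\,d\sigma =: (Gg_1)(t)$ for each $t$. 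The same argument applied to the sequence $(-s_{n_k})$ on $Gg_1$ recovers $Ff_1$, showing $Ff_1 \in AA(\mathbb{R},X)$.

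For the $\mathcal C_0$ piece, given $\varepsilon>0$, pick $T_\varepsilon>0$ with $\|f_2(s)\|\le \varepsilon$ for $s\ge T_\varepsilon$, and split the integral at $T_\varepsilon$ (for $t > 2T_\varepsilon$ split at $t/2$ instead). On the interval $[T_\varepsilon,t]$ the norm is bounded by $\varepsilon\int_0^\infty \frac{CM}{1+|\omega|\sigma^\alpha}d\sigma$, while on $(-\infty,T_\varepsilon]$ one uses $\|f_2\|_\infty$ and the bound
\[
\int_{-\infty}^{T_\varepsilon}\frac{CM}{1+|\omega|(t-s)^\alpha}ds = \int_{t-T_\varepsilon}^{\infty}\frac{CM}{1+|\omega|\sigma^\alpha}d\sigma \longrightarrow 0 \quad\text{as } t\to\infty,
\]
the tail of a convergent integral. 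Letting $t\to\infty$ first and then $\varepsilon\to 0$ gives $(Ff_2)(t)\to 0$.

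The main obstacle is justifying the passage to the limit inside the integral in the almost-automorphic step: almost automorphy only guarantees pointwise (not uniform, not $L^1$) convergence of $f_1(\,\cdot+s_{n_k})$, so one must lean entirely on a uniform bound and the integrability of $\|S_\alpha\|$ to invoke dominated convergence. Everything else is a routine split-the-integral estimate.
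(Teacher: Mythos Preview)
The paper does not actually prove this lemma; it is quoted verbatim from \cite{agarwal00} (Agarwal--Andrade--Cuevas, 2010) and no argument is supplied in the present text. Your proposal is therefore not competing with any proof in the paper. That said, the strategy you outline---split $f=f_1+f_2$ along the $AA\oplus\mathcal C_0$ decomposition, push $Ff_1$ into $AA(\mathbb{R},X)$ via the change of variables $\sigma=s-s_{n_k}$ and dominated convergence against the integrable Cuesta bound $CM/(1+|\omega|\sigma^\alpha)$, and show $Ff_2\in\mathcal C_0$ by a two-region split of the convolution---is precisely the argument given in the cited reference and is correct as written. Your identification of the only delicate point (pointwise-only convergence in the almost automorphic limit, rescued by the uniform $L^1$ majorant) is accurate; nothing is missing.
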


We state the one of the main result of this paper.
\begin{theorem}
Under the hypothesis $(H1)-(H3),$ there exists a unique asymptotically almost automorphic mild solution of equation (\ref{EE2}) if
$$\Lambda=\frac{CM|\omega|^{-\frac{1}{\alpha}}\pi}{\alpha \sin(\frac{\pi}{\alpha})}L_f(1+\|k\|_{L^1(\mathbb{R}^+)}) <1.$$
\end{theorem}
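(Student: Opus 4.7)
The plan is to cast equation (\ref{EE2}) as a fixed point problem on the Banach space $AAA(\mathbb{R},X)$ (endowed with the sup norm) and apply the Banach contraction principle. Define the operator
\[
(Tu)(t)=\int_{-\infty}^{t}S_{\alpha}(t-s)\,f\bigl(s,u(s),Ku(s)\bigr)\,ds,\qquad t\in\mathbb{R}.
\]
A fixed point of $T$ is exactly a mild solution in the sense of the definition given in Section~2.

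First I would check that $T$ maps $AAA(\mathbb{R},X)$ into itself. Given $u\in AAA(\mathbb{R},X)$, write $Ku(t)=\int_{0}^{\infty}k(\tau)u(t-\tau)\,d\tau$; since $k\in L^{1}(\mathbb{R}^{+})$ and $AAA(\mathbb{R},X)$ is invariant under translation and convolution with $L^{1}$ kernels, $Ku\in AAA(\mathbb{R},X)$ and $\|Ku\|_{\infty}\le\|k\|_{L^{1}}\|u\|_{\infty}$. Combining $u$ and $Ku$ into a single $AAA$-function with values in $X\times X$, the composition lemma of Liang et al.\ (together with hypothesis (H2), which yields uniform continuity of $f$ on bounded sets uniformly in $t$) gives $f(\cdot,u(\cdot),Ku(\cdot))\in AAA(\mathbb{R},X)$. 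Applying the lemma from \cite{agarwal00} (the lemma just before the theorem) then shows that $Tu=F\bigl(f(\cdot,u(\cdot),Ku(\cdot))\bigr)\in AAA(\mathbb{R},X)$.

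Next I would establish that $T$ is a contraction. Using (H2), the bound on $S_{\alpha}$ from Cuesta's theorem, and the elementary estimate $\|Ku(s)-Kv(s)\|\le\|k\|_{L^{1}}\|u-v\|_{\infty}$, for any $u,v\in AAA(\mathbb{R},X)$ and any $t\in\mathbb{R}$,
\begin{align*}
\|(Tu)(t)-(Tv)(t)\|
&\le\int_{-\infty}^{t}\frac{CM}{1+|\omega|(t-s)^{\alpha}}\,L_{f}\bigl(\|u(s)-v(s)\|+\|Ku(s)-Kv(s)\|\bigr)\,ds\\
&\le CM\,L_{f}\bigl(1+\|k\|_{L^{1}(\mathbb{R}^{+})}\bigr)\|u-v\|_{\infty}\int_{0}^{\infty}\frac{d\tau}{1+|\omega|\tau^{\alpha}}.
\end{align*}
Inserting the explicit evaluation $\int_{0}^{\infty}(1+|\omega|\tau^{\alpha})^{-1}d\tau=|\omega|^{-1/\alpha}\pi/\bigl(\alpha\sin(\pi/\alpha)\bigr)$ recorded in Section~2 yields $\|Tu-Tv\|_{\infty}\le\Lambda\|u-v\|_{\infty}$. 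Since $\Lambda<1$ by hypothesis, Banach's theorem produces a unique fixed point of $T$ in $AAA(\mathbb{R},X)$, which is the desired unique asymptotically almost automorphic mild solution.

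The main subtlety I anticipate is the invariance step: the composition lemma stated in the excerpt is phrased for $f(t,x)$ with a single $X$-argument, whereas we need it for $f(t,x,y)$; I would justify this by regarding $(u,Ku)$ as a single $AAA$-function into $X\times X$ (after verifying that $Ku\in AAA(\mathbb{R},X)$, which itself uses that convolution of an $AAA$-function with an $L^{1}$-kernel splits into a convolution of the almost automorphic part with $k$, giving an almost automorphic function, plus the convolution of the $\mathcal{C}_{0}$-part with $k$, giving a $\mathcal{C}_{0}$-function by a standard dominated-convergence argument). The contraction estimate itself is routine once the bound on $\|S_{\alpha}(t)\|$ and the explicit integral from Cuesta's theorem are in hand.
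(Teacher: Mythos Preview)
Your proposal is correct and follows essentially the same route as the paper: define the solution operator on $AAA(\mathbb{R},X)$, argue it maps $AAA(\mathbb{R},X)$ into itself via the composition lemma and the lemma from \cite{agarwal00}, and then obtain the contraction estimate $\|Tu-Tv\|_\infty\le\Lambda\|u-v\|_\infty$ using Cuesta's bound and the explicit integral. If anything, you are more careful than the paper on the invariance step (the paper simply asserts that the integral lies in $AAA(\mathbb{R},X)$), and your remark about adapting the single-variable composition lemma to $f(t,x,y)$ by viewing $(u,Ku)$ as an $AAA$-function into $X\times X$ is exactly the right way to close that gap.
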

\begin{proof}
Define the operator
$$Fu(t)=\int_{-\infty}^tS_{\alpha}(t-s)f(s,u(s),Ku(s))ds, \quad t \in \mathbb{R}.$$
The operator $F$ is well defined and continuous. As we know from the assumption $(H2)$ and composition theorem for asymptotically almost automorphic functions that $f \in AAA(\mathbb{R} \times X, X).$ Moreover, one can observe that
$$\int_{-\infty}^tS_{\alpha}(t-s)f(s,u(s),Ku(s))ds \in AAA(\mathbb{R}, X).$$
Now for $u, v \in AAA(\mathbb{R}\times X, X),$ we get
\begin{eqnarray}
&&\|Fu(t)-Fv(t)\| \nonumber \\ &&= \Big\| \int_{-\infty}^tS_{\alpha}(t-s)(f(s,u(s),Ku(s))-f(s,v(s),Kv(s)))ds \Big\|
\nonumber \\ && \le \int_0^{\infty}\|S_{\alpha}(s)\|_{\mathbf{B}(X)}\|f(t-s,u(t-s),Ku(t-s))\nonumber \\ &&-f(t-s,v(t-s),Kv(t-s))\|ds
\nonumber \\ && \le \int_0^{\infty} \frac{CM}{1+|\omega|s^{\alpha}} L_f(\|u(t-s)-v(t-s)\|+\|Ku(t-s)-Kv(t-s)\|) ds. \nonumber \\
\end{eqnarray}
Let us calculate
\begin{eqnarray}
\|Ku(t)-Kv(t)\| &&= \Big\| \int_{-\infty}^t k(t-s)(u(s)-v(s))ds \Big\|
\nonumber \\ && \le \int_0^{\infty}|k(s)|\|u(t-s)-v(t-s)\|ds
\nonumber \\ && \le \Big(\int_0^{\infty} |k(s)|ds\Big) \|u-v\|_{\infty}
\nonumber \\ && \le \|k\|_{L^1(\mathbb{R}^+)} \|u-v\|_{\infty}.
\end{eqnarray}
Using the above estimate, we get
\begin{eqnarray}
\|Fu(t)-Fv(t)\| && \le \int_{-\infty}^t \frac{CM}{1+|\omega|s^{\alpha}}ds L_f(1+\|k\|_{L^1(\mathbb{R})})\|u-v\|_{\infty}
\nonumber \\ && \le \frac{CM|\omega|^{-\frac{1}{\alpha}}\pi}{\alpha \sin(\frac{\pi}{\alpha})}L_f(1+\|k\|_{L^1(\mathbb{R}^+)})\|u-v\|_{\infty}.
\end{eqnarray}
From our assumption
$$\Lambda=\frac{CM|\omega|^{-\frac{1}{\alpha}}\pi}{\alpha \sin(\frac{\pi}{\alpha})}L_f(1+\|k\|_{L^1(\mathbb{R}^+)}) <1$$ and we get
 $$\|Fu-Fv\|_{\infty} < \Lambda \|u-v\|_{\infty}.$$ Thus the mapping $F$ is a contraction and hence by using Banach contraction principle, we conclude that there exists an unique asymptotically almost automorphic solution of equation (\ref{EE2}).
\end{proof}
 In the next result, we drop the Lipschitz condition. And it is no wonder if we need few more hypothesis, as in like every situation if you relax one thing then you have to pay for the other. So let us assume that,
 \begin{itemize}
 \item[(H4)] The function $f(t,u,\phi)$ is uniformly continuous on any bounded subset $\Omega \subset X\times X$ uniformly in $t\in \mathbb{R}$ and every bounded subset $\Omega \in X \times X,$ the set $\{f(\cdot,u,\phi): (u,\phi) \in X\times X\}$ is bounded in $AAA(\mathbb{R}\times X, X).$
 \item[(H5)] There exists a continuous nondecreasing function $W:\mathbb{R} \rightarrow \mathbb{R}$
such that for each $(t, u, \phi) \in \mathbb{R}\times X\times X, \ \|f(t,u,\phi)\| \le W(\|u\|+\|\phi\|).$
\end{itemize}

 \begin{theorem}
 Assume that $f \in AAA(\mathbb{R} \times X, X)$ satisfying the assumptions $(H_1)-(H_3)$ and the following conditions,
 \item (i) For each $r>0,$
 $$\lim_{t\rightarrow \infty} \frac{1}{h(t)} \int_{-\infty}^t \frac{W(rh(s))}{1+|\omega|(t-s)^{\alpha}}ds=0,$$ where $h$ is a function given in Lemma \ref{lemma3}. Denote
 $$\beta(r)=CM\Big\|\int_{-\infty}^t \frac{W(rh(s))}{1+|\omega|(t-s)^{\alpha}}ds \Big\|_{h}.$$
 \item (ii) For each $\epsilon >0,$ there exists $\delta>0$ such that for every $u,v \in \mathcal{C}_h(X), \|u-v\|_{h} <\delta$ implies that
  $$\int_{-\infty}^t \frac{\|f(s, u(s), Ku(s))-f(s, v(s), Kv(s))\|}{1+|\omega|(t-s)^{\alpha}}ds \le \epsilon$$ for all $t \in \mathbb{R}.$
  \item (iii) For each $a, d \in \mathbb{R}$ and $r>0,$ the set $$\{f(s,h(s)x, K(h(s)x): a \le s \le d, x \in \mathcal{C}_h(X), \|x\|_h \le r\}$$ is relatively compact in $X.$
  \item (iv) $\liminf_{\xi \rightarrow \infty}\frac{\xi}{\beta(\xi)}>1.$ \\
      Then the equation admits an unique mild solution in $AAA(\mathbb{R} \times X, X).$
 \end{theorem}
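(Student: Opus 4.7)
The plan is to abandon the contraction argument of the previous theorem and invoke a compactness-based fixed point theorem (Schauder) for the integral operator
\[(Fu)(t)=\int_{-\infty}^{t}S_{\alpha}(t-s)\,f(s,u(s),Ku(s))\,ds,\]
regarded as a self-map of a closed ball in the weighted Banach space $\mathcal{C}_h(X)$. The asymptotically almost automorphic character of the eventual fixed point is recovered a posteriori using the composition lemma and the convolution lemma stated in Section~2.

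First I would produce an invariant ball. For $u\in\mathcal{C}_h(X)$ with $\|u\|_h\le r$, pointwise estimates give $\|u(s)\|\le r\,h(s)$ and $\|Ku(s)\|\le\|k\|_{L^1}\,r\,h(s)$, so by (H5) and Cuesta's bound on $S_\alpha$,
\[\frac{\|Fu(t)\|}{h(t)}\le\frac{CM}{h(t)}\int_{-\infty}^{t}\frac{W\bigl((1+\|k\|_{L^1})\,r\,h(s)\bigr)}{1+|\omega|(t-s)^{\alpha}}\,ds.\]
By the definition of $\beta$ in condition~(i), taking the supremum in $t$ yields $\|Fu\|_h\le\beta\bigl((1+\|k\|_{L^1})r\bigr)$; condition~(i) also shows the right-hand side tends to $0$ as $t\to\infty$, so $Fu\in\mathcal{C}_h(X)$. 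Condition~(iv), $\liminf_{\xi\to\infty}\xi/\beta(\xi)>1$, then supplies an $r_0>0$ with $\beta\bigl((1+\|k\|_{L^1})r_0\bigr)\le r_0$, and the closed ball $B_{r_0}=\{u\in\mathcal{C}_h(X):\|u\|_h\le r_0\}$ is invariant under $F$.

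Next I would establish continuity of $F$ on $B_{r_0}$ directly from condition~(ii), and then verify relative compactness of $F(B_{r_0})$ via Lemma~\ref{lemma3}. Criterion (a2) is exactly the uniform version of (i). For criterion (a1) I would establish equicontinuity of $\{Fu|_{[0,b]}:u\in B_{r_0}\}$ by a standard splitting argument: write $Fu(t+\delta)-Fu(t)$ as one integral over $(-\infty,t-\eta]$ plus two short pieces, and control the ``old'' piece via strong continuity of $S_\alpha$ (hypothesis~(H3)) and its uniformly integrable tail from Cuesta's bound. Pointwise relative compactness of $\{Fu(t):u\in B_{r_0}\}$ in $X$ follows from~(iii) together with the closed-convex-hull argument for Bochner integrals. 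Schauder's theorem then produces a fixed point $u_{\ast}\in B_{r_0}$.

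Finally, the fixed point lies a priori only in $\mathcal{C}_h(X)$, so I would upgrade it: $u_{\ast}$ is dominated by $r_0 h$ and satisfies the decay in (i), hence (H4) together with the composition lemma gives $f(\cdot,u_{\ast}(\cdot),Ku_{\ast}(\cdot))\in AAA(\mathbb{R}\times X,X)$, and the convolution lemma then forces $u_{\ast}=Fu_{\ast}\in AAA(\mathbb{R},X)$. The main obstacle is the equicontinuity step in criterion (a1): the convolution runs over the unbounded half-line $(-\infty,t]$ and the nonlocal term $Ku$ must be tracked in parallel, so one needs both the decay in Cuesta's kernel estimate and condition~(i) to make the tail uniform over $B_{r_0}$. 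A secondary issue is that Schauder's theorem does not supply \emph{uniqueness}, so the word ``unique'' in the statement presumably should be weakened to ``at least one'' unless an additional monotonicity or local Lipschitz hypothesis was inadvertently omitted.
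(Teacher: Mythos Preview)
Your overall strategy---compactness in $\mathcal{C}_h(X)$ plus a topological fixed-point argument---is the paper's as well, and your observation that neither Schauder nor Leray--Schauder yields the word ``unique'' is correct. The paper, however, does not build an invariant ball and apply Schauder: it invokes the Leray--Schauder alternative, reading condition~(iv) as an \emph{a priori} bound. Namely, if $u^{\lambda}=\lambda\,\Xi(u^{\lambda})$ for some $\lambda\in(0,1)$, then the very estimate you wrote gives $\|u^{\lambda}\|_h/\beta(\|u^{\lambda}\|_h)\le 1$, and (iv) forces $\|u^{\lambda}\|_h$ to remain bounded. Your reading of (iv) as supplying an invariant ball is legitimate in spirit, though your own inequality delivers $\|Fu\|_h\le\beta\bigl((1+\|k\|_{L^1})r\bigr)$ rather than $\beta(r)$, so strictly you would need $\liminf_{\xi\to\infty}\xi/\beta(\xi)>1+\|k\|_{L^1}$; the paper's argument is equally cavalier about this factor.

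The substantive gap in your plan is the final ``upgrade'' from $\mathcal{C}_h(X)$ to $AAA(\mathbb{R},X)$. The composition lemma you cite needs $u_{\ast}\in AAA(\mathbb{R},X)$ as an \emph{input} in order to conclude that $f(\cdot,u_{\ast}(\cdot),Ku_{\ast}(\cdot))\in AAA$; at that stage you only know $u_{\ast}\in\mathcal{C}_h(X)$ (and since $h$ is unbounded, $u_{\ast}$ need not even be bounded), so the bootstrap is circular. The paper sidesteps this by restricting the operator \emph{before} the fixed-point step: one first checks, via the composition and convolution lemmas, that $\Xi$ maps $AAA(\mathbb{R},X)$ into itself, notes that $AAA(\mathbb{R},X)$ sits as a closed convex subset of $\mathcal{C}_h(X)$, and then runs Leray--Schauder on that subset. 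The fixed point is then automatically in $AAA(\mathbb{R},X)$, with no a posteriori upgrade required. Your Schauder route can be repaired the same way: take the invariant set to be $B_{r_0}\cap AAA(\mathbb{R},X)$ rather than $B_{r_0}$.
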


       \begin{proof}
       Define the operator by
       $$(\Xi u)(t)=\int_{-\infty}^t S_{\alpha}(t-s)(f(s,u(s),Ku(s))ds,$$ for $t \in \mathbb{R}.$

One can observe the followings,
\item (I) For $u \in \mathcal{C}_h(X),$ we get $\frac{\|\Xi u(t)\|}{h(t)} \le CM  \int_{-\infty}^t \frac{W(\|u\|_hh(s))}{1+|\omega|(t-s)^{\alpha}}ds$ and hence $\Xi$ is well defined.
\item (II) For any $\epsilon >0,$ there exists $\delta>0$ satisfying condition $(ii)$ such that for $u, v \in \mathcal{C}_h(X), \|u-v\|_h \le \delta,$ we get
    $$\|(\Xi u)(t)-(\Xi v)(t)\| \le CM \int_{-\infty}^t \frac{\|f(s, u(s), Ku(s))-f(s, v(s), Kv(s))\|}{1+|\omega|(t-s)^{\alpha}}ds \le \epsilon,$$ which implies that $\Xi$ is continuous.
   \item (III) Now we shall show that $\Xi$ is completely continuous. Denote $B_r(X)=\{ x \in X : \|x\| \le r\}.$ Let $V=\Xi(B_r(\mathcal{C}_h(X)))$ and $v=\Xi(u)$
for $u \in B_r(\mathcal{C}_h(X)).$
By the continuity of $S_{\alpha}(\cdot)$ and the condition $(iii)$ of $f,$ we conclude that $$D=\{S_{\alpha}(s)f(\tau, h(\tau)x): 0 \le s \le t, 0 \le \tau \le t, \|x\|_h \le r\}$$ is relatively compact. Also we get $V_b(t) v_b(t) \in t\overline{c_0(D)},$ where $\overline{c_0(D)}$ is the convex hull of $D,$ which shows that $V_b(t)$ is relatively compact for each $t \in [0,b].$ Here subscript $b$ is for the $t \in [0,b].$
Our next aim is to show that $V_b$ is equicontinuous.
\begin{eqnarray}
&&v(t+s)-v(t) \nonumber \\ &&=\int_{-\infty}^{t+s}S_{\alpha}(t+s-\tau)f(\tau, u(\tau), Ku(\tau))d\tau \nonumber \\ &&-
\int_{-\infty}^{t}S_{\alpha}(t-\tau)f(\tau, u(\tau), Ku(\tau))d\tau \nonumber \\ &&=
\int_{-\infty}^{t}S_{\alpha}(t-\tau)(f(\tau+s, u(\tau+s), Ku(\tau+s))-f(\tau, u(\tau), Ku(\tau)))d\tau.
\end{eqnarray}
Using the assumption $(H5)$ we get that $V_b$ is equicontinuous.
From the condition $(i),$
we have
$$\frac{\|v(t)\|}{h(t)} \le \frac{CM}{h(t)} \int_{-\infty}^t \frac{W(rh(s))}{1+|\omega|(t-s)^{\alpha}}ds \rightarrow 0$$ as $t \rightarrow \infty.$ because the above relation is independent of $u \in B_r(\mathcal{C}_h(X))$ and thus $V$ is relatively compact set in $\mathcal{C}_h(X).$
\item (IV) Let us denote $u^{\lambda}(\cdot)$ a solution of equation $u^{\lambda}=\lambda \Xi(u^{\lambda})$ for some $\lambda \in (0,1).$ Now using the estimate
    $$\|u^{\lambda}\| \le CM \int_{-\infty}^t \frac{W(\|u^{\lambda}\|_h h(s))}{1+|\omega|(t-s)^{\alpha}}ds \le \beta(\|u^{\lambda}\|_h)h(t),$$ we get
    $\frac{\|u^{\lambda}\|_h}{\beta(\|u^{\lambda}\|_h)} \le 1.$
    Using the condition $(iv),$ we have
    $\{u^{\lambda}: u^{\lambda}=\lambda \Xi(u^{\lambda})\}, \lambda \in (0,1)$ is bounded.
    \item (V) Further from the assumptions $t \rightarrow f(t,u(t)) \in AAA(\mathbb{R}\times X, X),$ when $u \in AAA(\mathbb{R}, X).$ We can also see that $\Xi(AAA(\mathbb{R}\times X, X)) \subset AAA(\mathbb{R}\times X,X).$ Also $AAA(\mathbb{R}\times X, X)$ is a closed subspace of $\mathcal{C}_h(X).$ Consider the map $\Pi : \overline{AAA(\mathbb{R}\times X, X)} \rightarrow \overline{AAA(\mathbb{R}\times X, X)}.$
        This map is completely continuous. By using the well known Leray Schauder alternative theorem, we deduce that $\Pi$ has a fixed point $u \in AAA(\mathbb{R}, X),$ which is asymptotically almost automorphic mild solution of our fractional integro differential equation.
       \end{proof}
\begin{remark}
Using the transformation $t-s=s_1,$ the integral in conditions $(i)-(ii)$ could replaced by
$$\lim_{t\rightarrow \infty} \frac{1}{h(t)} \int_0^{\infty} \frac{W(rh(t-s))}{1+|\omega|s^{\alpha}}ds=\lim_{t\rightarrow \infty} \frac{1}{h(t)} \int_{-\infty}^t \frac{W(rh(s))}{1+|\omega|(t-s)^{\alpha}}ds=0.$$
and
 $$\int_0^{\infty} \frac{\|f(t-s, u(t-s), Ku(t-s))-f(t-s, v(t-s), Kv(t-s))\|}{1+|\omega|s^{\alpha}}ds \le \epsilon.$$
\end{remark}

\begin{remark}
       If we assume the Holder continuity of $f,$ that is
       $$\|f(t,u, Ku)-f(t,v, Kv)\| \le (L+\|k\|_{L^1{\mathbb{R}^+}}) \|u-v\|^{\theta},$$ where $\theta \in (0,1)$ for all $t \in \mathbb{R}$ and $u \in X.$ Note that $Ku=0$ when $u=0.$ Let us assume the following
       \begin{itemize}
\item[(1)] $f(t,0,0)=p(t).$
\item[(2)] $\sup_{t \in \mathbb{R}} \int_{-\infty}^t \frac{h^{\theta}(s)}{1+|\omega|(t-s)^{\alpha}}ds=\frac{\gamma}{CM} <\infty.$
\item[(3)] For each $a, d \in \mathbb{R}$ and $r>0,$ the set $$\{f(s,h(s)x): a\le s \le d, x \in \mathcal{C}_h(X), \|x\|_h \le r\}$$ is relatively compact in $X.$
\item[(4)] $\liminf_{\xi \rightarrow \infty} \frac{\xi}{\beta(\xi)}>1.$
\end{itemize}
Then the fractional integro differential equation has at least one asymptotically almost automorphic mild solution.

It is not difficult to see the above assertion is true just by doing some setting. Let us denote $\gamma_0=\|p\|, \gamma_1=L+\|k\|_{L^1(\mathbb{R}^+})$
and take $W(\xi)=\gamma_0+\gamma_1{\xi}^{\theta}$ and hence the condition $(H5)$ is satisfied.
 From the condition $(2)$ the function $f$ satisfies condition $(i)$ of previous theorem. Also for each $\epsilon>0$ there exists a $\delta$ satisfying
 $0 < \delta^{\theta} < \frac{\epsilon}{\gamma_1 \gamma_2}$ such that for each $u, v \in \mathcal{C}_h(X), \|u-v\|_h \le \delta$ implies that
 $$CM\int_{-\infty}^t \frac{\|f(s, u(s), Ku(s))-f(s, v(s), Kv(s))\|}{1+|\omega|(t-s)^{\alpha}}ds \le \epsilon.$$ From the definition, assumption $(III)$ easily follows for $W$ and hence we infer that the equation (\ref{EE2})
posses at least one asymptotically almost automorphic mild solution. Hence we conclude the result.
\end{remark}
\section{Examples}
\textbf{Example-1:} Let us consider the following modified fractional relaxation oscillation equation initially defined by Agarwal et.al. \cite{agarwal},
\begin{eqnarray}
\partial_t^{\alpha}w(t,x)&&=\partial^2_{x}w(t,x)-\mu w(t,x) \nonumber \\ &&+\partial_t^{\alpha-1}\Big(\beta w(t,x)(\cos t+\cos \sqrt{2}t)+\beta e^{-|t|}\sin(w(t,x)) \nonumber \\ &&+\sin(\int_{-\infty}^t e^{t-s}w(t,s)ds) \Big), \quad t \in \mathbb{R}, \ x \in [0, \pi], \nonumber \\
&& w(t,0) =w(t,\pi)=0, t \in \mathbb{R},  \nonumber \\
&& w(t,\xi)=w_0(\xi),  \quad \xi \in [0, \xi], \ \mu>0,
 \end{eqnarray}
 where $w_0 \in L^2[0,\pi].$ Define the linear operator $A$ on $X=(L^2([0,\pi]),\|\cdot\|_2)$ by $Aw=w^{''}-\mu w$ with the domain
 $$D(A)=\{w \in X: w^{''} \in X, w(0)=w(\pi)=0\}.$$
 It is well known that $\Delta w =w^{''}$ is the infinitesimal generator of analytic semigroup on $L^2[0,\pi]$ \cite{pazy} and thus $A$ is sectorial of type $\omega=-\mu <0.$
 Denote $w(t)x=w(t,x)$ and $$f(t,w, Kw)(x)=\beta w(x)(\cos t+\cos \sqrt{2}t)+\beta e^{-|t|}\sin(w(x))+\sin(Kw(x))$$ for each $w \in X.$ One can easily see that the function $f(t,u, Ku)$ is asymptotically almost automorphic in $t$ for each $u \in X.$ Now under the condition
 $$|\beta|+1<\frac{\alpha \sin{\frac{\pi}{\alpha}}}{3CM|\mu|^{-\frac{1}{\alpha}}\pi},$$ there exists an unique asymptotically almost automorphic mild solution.
\\\\
\textbf{Example-2:} One can also consider the following fractional order delay
relaxation oscillation equation for $\alpha \in (1,2),$
              \begin{eqnarray}
\frac{\partial^{\alpha}u(t,x)}{\partial
t^{\alpha}}&=&\frac{\partial^2u(t,x)}{\partial
x^2}-pu(t,x)+\frac{\partial^{\alpha-1}}{\partial
t^{\alpha-1}}(f(t,u(t,x),u(t-\tau,x))), \ \tau>0, \nonumber \\
&& \quad \quad t \in \mathbb{R}, \ x \in (0,\pi) \nonumber \\
 u(t,0)& = & u(t, \pi) = 0, \quad t \in \mathbb{R}, \nonumber \\
u(t,x)&=& \phi(t,x) \quad t \in [-\tau,0],
             \end{eqnarray}
where $p>0$ and $f$ is a weighted pseudo almost automorphic or
Weyl almost automorphic function in $t.$ Also assume that $f$
satisfies Lipschitz condition in both variable with Lipschitz
constant $L_f.$
Note that $\int_{-\infty}^tk(t-s)u(s)ds=\int_{-\infty}^tk(-s)u_t(s)ds=J(u_t),$ which can be thought like function of $u_t$ and
hence can be considered as functional differential equations.
Using the transformation $u(t)x=u(t,x)$ and define
$Au=\frac{\partial^2 u}{\partial x^2}-pu, \ u \in D(A),$ where
$$D(A)=\Big\{u \in L^2((0,\pi),\mathbb{R}), u^{'} \in
L^2((0,\pi),\mathbb{R}), u^{''} \in
L^2((0,\pi),\mathbb{R}),u(0)=u(\pi)=0 \Big\},$$ the above equation
can be transform into
              \begin{eqnarray}
\frac{d^{\alpha}u(t)}{dt^{\alpha}}=Au(t)+\frac{d^{\alpha-1}}{dt^{\alpha-1}}g(t,u(t),u_t(-\tau)),
              \end{eqnarray}
$t \in \mathbb{R}$ and $u(t)=\phi(t) \ t \in [-\tau,0].$ It is to
note that $A$ generates an analytic semigroup $\{T(t) :t\ge 0$ on
$X,$ where $X=L^2((0,\pi),\mathbb{R}).$ Hence $pI-A$ is sectorial
of type $\omega=-p<0.$ Further $A$ has discrete spectrum with
eigenvalues of the form $-k^2; k \in N,$ and corresponding
normalized eigenfunctions given by
$z_k(x)=(\frac{2}{\pi})^{\frac{1}{2}}\sin(kx).$ As $A$ is
analytic. Let us assume that
$$\frac{2L_f|\omega|^{\frac{-1}{\alpha}}\pi}{\alpha \sin
\frac{\pi}{\alpha}}<1.$$ Thus under all the required assumption on
$f,$ the existence of weighted almost automorphic solutions is
ensured accordingly. Similarly for the existence of Weyl almost
automorphic solutions, we assume that $2L_f\|S_{\alpha}\|<1.$
\\\\
\textbf{Discussion:} After H. Bohr introduced the concept of almost periodicity \cite{bohr} and then Bochner introduced the concept of almost automorphy, there have been many important generalization of these functions like:
\begin{itemize}
\item[i.] Pseudo almost periodic and automorphic,
\item[ii.] Asymptotic almost periodic and automorphic,
\item[iii.] Weighted pseudo almost periodic and automorphic
\item[iv.] Stepanov almost periodic and automorphic,
\item[v.] Stepanov type pseudo almost periodic and automorphic,
\item[vi] Stepanov type weighted pseudo almost periodic and automorphic,
\end{itemize}
and many more. For the origin and details of these function, one may see \cite{diagana1,ngur} and reference therein. The the application of these function in the area of differential equations attracted many mathematicians and extensive research have been done. In recent year, the application of these function in the field of fractional differential equations got lot of attention after introduction of the resolvent operator $S_{\alpha}(t),$ for details, we refer to \cite{araya, baj}. In this work, we have shown the existence of asymptotically almost automorphic solution of a fractional integro differential equation. The main tool we used are fixed point theorems.

\end{document}